\documentclass[final,12pt,3p]{elsarticle}

\usepackage{graphicx}
\usepackage{amssymb}
\usepackage{amsthm}
\usepackage{amsmath}
\usepackage{algorithm}
\usepackage{algorithmic}
\usepackage{longtable} 
\usepackage{tikz}
\usepackage{lineno}
\usepackage{verbatim} 
\usepackage{hyperref}
\usepackage[dvipsnames]{xcolor}
\usepackage{caption}
\usetikzlibrary{shapes}
\usepackage{subcaption}
\usepackage{mathrsfs} 
\usepackage{array}
\usepackage{multirow}
\usepackage{booktabs} 
\usepackage{adjustbox}
\usepackage[title]{appendix}

\biboptions{sort&compress}
\hypersetup{
colorlinks=true,
linkcolor=red,
filecolor=green,
urlcolor=green,
anchorcolor=green,
citecolor=cyan,
pdftitle={Overleaf Example},
pdfpagemode=FullScreen,
}
\theoremstyle{plain}
\newtheorem{theorem}{Theorem}
\newtheorem{proposition}[theorem]{Proposition}

\theoremstyle{definition}

\theoremstyle{remark}
\newtheorem{remark}{Remark}

\DeclareMathOperator{\diver}{div}

\newcommand{\good}[1]{\textcolor{OliveGreen}{\textbf{#1}}}
\newcommand{\bad}[1]{\textcolor{BrickRed}{#1}}
\begin{document}

\begin{frontmatter}

\title{A Nonstandard Finite Difference Scheme for 
a Nonlinear Parabolic Equation with p-Laplacian-Type Diffusion}

\author[BUW,MAIS]{Achraf Zinihi}
\ead{a.zinihi@edu.umi.ac.ma} 

\author[BUW]{Matthias Ehrhardt\corref{Corr}}
\cortext[Corr]{Corresponding author}
\ead{ehrhardt@uni-wuppertal.de}

\author[MAIS]{Moulay Rchid Sidi Ammi}
\ead{rachidsidiammi@yahoo.fr}

\address[BUW]{University of Wuppertal, Applied and Computational Mathematics,\\
Gaußstrasse 20, 42119 Wuppertal, Germany}

\address[MAIS]{Department of Mathematics, AMNEA Group, Faculty of Sciences and Techniques,\\
Moulay Ismail University of Meknes, Errachidia 52000, Morocco}


\begin{abstract}
We propose and analyze a nonstandard finite difference (NSFD) scheme for nonlinear parabolic equations involving a $p$-Laplacian-type diffusion operator in one- and two-dimensional spatial domains. 
Following Mickens’ design principles, the proposed discretization employs a nonlinear denominator function $\phi(\cdot)$ together with a nonlocal approximation of the nonlinear diffusion term $\Delta_p$, yielding a structure-preserving discrete model. 
The scheme is designed to retain key qualitative properties of the continuous problem, including positivity, boundedness, and stability, which may be lost by standard finite difference methods (FDMs). 
We establish the well-posedness of the continuous model, derive the NSFD scheme, and investigate its consistency, convergence, and local truncation error. 
Numerical experiments confirm the theoretical results and demonstrate that, unlike the standard explicit FDM, the proposed NSFD scheme avoids spurious oscillations and nonphysical negative solutions even for relatively large time-step sizes.
\end{abstract}

\begin{keyword}
Nonstandard finite difference method \sep $p$-Laplacian operator \sep Parabolic PDE \sep Nonlinear diffusion.

\emph{2020 Mathematics Subject Classification:} 35K55, 32W50, 65J15.
\end{keyword}

\journal{Applied Mathematics Letters}


\end{frontmatter}


\section{Introduction}\label{S1}
Numerical simulations have become an indispensable tool for investigating nonlinear parabolic partial differential equations. 
In many applications, analytical solutions are unavailable, making numerical methods the primary means for exploring the qualitative and quantitative behavior of these systems. 
This challenge is particularly pronounced for nonlinear parabolic partial differential equations posed in one- or two-dimensional spatial domains, where nonlinear diffusion mechanisms, intricate spatial interactions, and geometric effects significantly increase computational complexity. 

Let $\Omega \subset \mathbb{R}^n$ be a bounded domain with smooth boundary $\partial\Omega$, where $n = 1$ or $2$, and let $f$ be a sufficiently smooth function.
We define $\Phi_p(y) = y^{p-2}$, $p > 1$, and $\lambda > 0$.
Nonlinear parabolic equations of the form
\begin{equation}\label{E1.1}
\left\{\begin{aligned}
& \frac{\partial u}{\partial t} = \lambda\diver(\Phi_p(|\nabla u|)\nabla u) + f(u), \quad \text { in } \mathcal{U} = [0, T] \times \Omega,\\ 
& \nabla u \cdot \vec{n}=0, \quad  \text { on } \Sigma = (0, T) \times \partial\Omega, \\ 
& u(0, \cdot) = u_0(\cdot), \quad  \text { in } \Omega,
\end{aligned}\right.
\end{equation}
arise in a broad range of applications including image processing \cite{Atlas2014}, mathematical epidemiology \cite{Zinihi2025FDE}, 
porous medium flows \cite{Vazquez2006}, and nonlinear heat conduction \cite{Lindqvist2019}.
The case $p = 2$ recovers the classical linear diffusion equation, while $p \ne2$ introduces strong nonlinearity through 
the so-called $p$-Laplacian operator $\Delta_p u = \diver(|\nabla u|^{p-2}\nabla u)$. 
A fundamental requirement in many of these applications is that the numerical solution remain non-negative and uniformly bounded, 
reflecting the physical or biological meaning of the quantity $u$ (e.g., a concentration or density). 
Standard explicit \textit{finite difference methods} (FDMs) generally fail to preserve these properties unless stringent step-size restrictions are imposed, 
and they may produce spurious oscillations or nonphysical negative values for moderate time steps.

To address these shortcomings, we propose a \textit{nonstandard finite difference} (NSFD) scheme in the spirit of \cite{Mickens1999, Mickens2006}. 
NSFD methods replace the standard discrete derivative $(u^{m+1}-u^m)k^{-1}$ by a generalized counterpart involving a denominator function $\phi(k)$, 
satisfying $\phi(k) = k + \mathcal{O}(k^2)$, and treat nonlinear terms through nonlocal representations on the computational grid. 
This approach has proven effective for positivity preservation in actuarial-insurance models \cite{Zinihi2026Actuarial}, 
convection–diffusion equations \cite{Ehrhardt2013}, epidemic reaction-diffusion systems \cite{Zinihi2025NSFD}, 
machine learning-based epidemic modeling frameworks \cite{Zinihi2026CA}, among others.

The construction of denominator functions satisfying positivity constraints was systematically investigated by Mickens \cite{Mickens1999, Mickens2006}. 
Zinihi, Ehrhardt, et al. \cite{Zinihi2026Actuarial} proposed an actuarial framework based on an epidemic model and employed an NSFD scheme to numerically investigate epidemic-adjusted insurance quantities and disease-driven mortality effects.
Ehrhardt and Mickens \cite{Ehrhardt2013} extended the methodology to convection-diffusion equations using the subequation approach. 
More recently, Zinihi et al. \cite{Zinihi2025NSFD} developed a positivity-preserving NSFD scheme for a reaction-diffusion epidemic model.
Zinihi and Ehrhardt \cite{Zinihi2026CA} employed a structure-preserving NSFD scheme to generate synthetic data that ensures positivity, boundedness, and numerical stability of the computed solutions.

The standard FDM for~\eqref{E1.1}, which motivates the present work, considers a uniform discretization of the spatial domain $\Omega$. 
Let $(x_i,y_j)$, $i=0,\ldots,N_x$, $j=0,\ldots,N_y$, denote the grid points with uniform mesh size $h_x = h_y = h$. 
The temporal interval is discretized using a time step $k$, and the approximation of $u(t_m, x_i, y_j)$ at time $t_m = m k$ is denoted by $u_{ij}^m$.
The nonlinear coefficient $\Phi_p$ is evaluated using central-difference approximations of the gradient. 
For convenience, we introduce the centered and forward difference quotients
\begin{equation*}
   D_{h,x}^0 u_{ij}^m  = \frac{u_{i+1,j}^m-u_{i-1,j}^m}{2h}, 
   \quad 
   D_{h,x}^+ u_{ij}^m = \frac{u_{i+1,j}^m-u_{ij}^m}{h},
\end{equation*}
and define
\begin{equation*}
\Delta_{h,x} u_{i+1/2,j}^m = D_{h/2,x}^0 \Bigl(u_{i+1/2,j}^m\, 
\Phi_{p,,i+1/2,j}^m
\Bigr), \quad \text{ with } \;\Phi_{p,,ij}^m = \Phi_p\Bigl(\sqrt{ (D_{h,x}^0 u_{ij}^m)^2 + (D_{h,y}^0 u_{ij}^m)^2}\Bigr),
\end{equation*}
and the averages $\Phi_{p,,i+1/2,j}^m:=(\Phi_{p,,i+1,j}^m+\Phi_{p,,ij}^m)/2$.
The quantities 
$D_{h,y}^0 u_{ij}^m$, $D_{h,y}^+ u_{ij}^m$, and $\Delta_{h,y}u_{ij}^m$ are defined analogously.
The resulting explicit FDM discretization of~\eqref{E1.1} is given by
\begin{equation*}
    D^+_k u_{ij}^m = \lambda \Bigl(D_{h/2,x}^0 \Delta_{h,x} u_{ij}^m + D_{h/2,y}^0 \Delta_{h,y} u_{ij}^m \Bigr)+ f(u_{ij}^m),
\end{equation*}
i.e.
\begin{equation}\label{E1.2}
\begin{split}
   u_{ij}^{m+1} &= u_{ij}^m + \lambda\frac{k}{h} \Big(
   \Delta_{h,x}u_{i+1/2,j}^m - \Delta_{h,x}u_{i-1/2,j}^m  + \Delta_{h,y}u_{i,j+1/2}^m - \Delta_{h,y} u_{i,j-1/2}^m \Big) + k f(u_{ij}^m),\\
   &= u_{ij}^m + \lambda r \Big(\bigl(u_{i+1,j}^m\,\Phi_{p,,i+1,j}^m- u_{ij}^m\,\Phi_{p,,ij}^m \bigr) 
    - \bigl(u_{ij}^m\, \Phi_{p,,ij}^m- u_{i-1,j}^m\,\Phi_{p,,i-1,j}^m \bigr)\\
    &\qquad+ \bigl(u_{i,j+1}^m\,\Phi_{p,,i,j+1}^m- u_{ij}^m\,\Phi_{p,,ij}^m \bigr)
    - \bigl(u_{ij}^m\, \Phi_{p,,ij}^m- u_{i,j-1}^m\,\Phi_{p,,i,j-1}^m \bigr) \Big) + k f(u_{ij}^m),
\end{split}
\end{equation}
with the parabolic mesh ratio $r=k/h^2$.
The homogeneous Neumann boundary conditions are discretized using second-order central finite differences as
\begin{equation}\label{E1.3}
\begin{aligned}
  u_{-1,-1}^{m} &= u_{1,1}^{m}, & u_{N_x+1,-1}^{m} &= u_{N_x-1,1}^{m}, \quad u_{-1,N_y+1}^{m} = u_{1,N_y-1}^{m}, \quad u_{N_x+1,N_y+1}^{m} = u_{N_x-1,N_y-1}^{m},\\
   u_{-1,j}^{m} &= u_{1,j}^{m}, & u_{N_x+1,j}^{m} &= u_{N_x-1,j}^{m}, \quad \forall j=0,\ldots,N_y,\\
  u_{i,-1}^{m} &= u_{i,1}^{m}, & u_{i,N_y+1}^{m} &= u_{i,N_y-1}^{m}, \quad \forall i=0,\ldots,N_x.
\end{aligned}
\end{equation}
Although the scheme~\eqref{E1.2}--\eqref{E1.3} is consistent with the continuous problem, it does not necessarily preserve positivity. 
In particular, for sufficiently large time steps, the coefficient associated with the central node $u_{ij}^m$ may become negative, potentially leading to nonphysical oscillations and violations of the discrete maximum principle. 
This observation motivates the development of an NSFD scheme capable of preserving the qualitative properties of the underlying continuous model.

The goal of this paper is to construct an NSFD scheme that 
(i) is explicit and easy to implement, 
(ii) unconditionally preserves positivity and boundedness, 
(iii) is consistent with the continuous PDE, and 
(iv) outperforms (FDM) on coarse time grids. 
The paper is organized as follows. 
Section~\ref{S2} presents the NSFD scheme. 
Section~\ref{S3} establishes its theoretical properties. 
Section~\ref{S4} provides numerical experiments.

\section{Nonstandard Finite Difference Scheme}\label{S2}

This section outlines the fundamental principles of NSFD schemes. These methods are designed to preserve the key qualitative properties of the underlying differential equations, such as positivity and conservation laws. 
Consequently, the numerical solutions remain bounded and stable.
Under standard regularity assumptions on $f$ and $u_0$, problem~\eqref{E1.1} admits a unique nonnegative weak solution
\begin{equation*}
    u\in C([0,T],L^2(\Omega)) \cap L^p(0,T;W_0^{1,p}(\Omega)),
\end{equation*}
see \cite[pp.~5--7]{Geredeli2014}. 
For further details, we refer the reader to \cite{Brezis2011}.

\subsection{The Denominator Function}\label{S2.1}

We briefly review the main principles of NSFD schemes, as introduced by Mickens \cite{Mickens1999}; see also \cite[p.~6]{Zinihi2025NSFD}. 
A finite difference scheme is an NSFD scheme if it satisfies at least one of Mickens's nonstandard discretization rules.
First, the order of the discrete derivative must coincide with that of the corresponding continuous derivative. First-order derivatives are commonly approximated by
\begin{equation*}
    \frac{d u}{dt}\Big|_{t=t_m} \approx \frac{u^{m+1}-u^m}{\phi(k)},
\end{equation*}
where the denominator function $\phi(k)>0$ satisfies $\phi(k)=k+\mathcal{O}(k^2)$, thereby ensuring consistency while improving the qualitative behavior of the numerical solution.

Second, nonlinear terms are often discretized using nonlocal representations involving values at different time levels. For instance, $u^2(t_m)\approx u^m u^{m+1}$ or $u^3(t_m)\approx (u^m)^2u^{m+1}$.
Finally, the discrete model should preserve the essential qualitative properties of the continuous problem, such as positivity, boundedness, equilibrium points, and their stability.

NSFD schemes approximate the time derivative by a generalized difference quotient
\begin{equation*}
\frac{d u}{dt}\Big|_{t=t_m} \approx \frac{u^{m+1}-\psi(k)u^m}{\phi(k)},
\end{equation*}
where $\psi(k)=1+\mathcal{O}(k)$. A typical choice is $\phi(k)=(e^{Lk}-1)L^{-1}$, where $L>0$ is a Lipschitz constant of the right hand side of~\eqref{E1.1}.

\subsection{Nonlocal Reaction Discretization}\label{S2.2}
When the right-hand side of~\eqref{E1.1} is linear, its discretization coincides with the classical FDM approximation. In the nonlinear case, since the function $f$ is not specified, several NSFD discretizations can be constructed; see, for example, \cite[pp.~11--13]{Zinihi2026Actuarial} and \cite[p.~6]{Zinihi2025NSFD}. 
In most applications the source function $f$ is positive and thus one can simply
use $f(u(x_j,y_j,t_m))\approx f(u_{ij}^m)$.
However, to ensure positivity in the general case, the arbitrary function $f$ is decomposed into its positive and negative parts at $u_{ij}^{m}$, with $f^+(u_{ij}^{m})$ 
is treated explicitly and the negative part is approximated nonlocally as $f^-(u_{ij}^{m+1})\approx f^-(u_{ij}^m)\,u_{ij}^{m+1}/u_{ij}^m$. 
Consequently, the proposed NSFD scheme for~\eqref{E1.1} is given by
\begin{equation}\label{E2.1}
\frac{u_{ij}^{m+1}-u_{ij}^{m}}{\phi(k)} = \lambda \mathcal{D}_h^{NS} u_{ij}^m + f^+(u_{ij}^m) - f^-(u_{ij}^{m+1}),
\end{equation}
where $\mathcal{D}_h^{NS}$ denotes the nonstandard discretization of the p-Laplacian$\Delta_p$.

Additionally, the discretization of the nonlinear diffusion operator must be modified according to Mickens's NSFD principles. 
To this end, we introduce a nonlocal (two time levels) approximation to the discrete fluxes by evaluating the central node at the future time level while retaining the neighboring nodes at the current time level. 
Specifically, the standard forward difference quotients $D_{h,x}^+$ and $D_{h,y}^+$ in \eqref{E1.2} are replaced by their two time levels counterparts, denoted by $D_{h,x}^{+,NS}$ and $D_{h,y}^{+,NS}$, which are obtained by substituting $-u_{ij}^m$ with $-u_{ij}^{m+1}$. 
Thus, this nonstandard, 'skew' spatial discretization reads 
\begin{equation}\label{E2.2}
\begin{split}
\mathcal{D}_h^{NS} u_{ij}^m =\,& \frac{1}{h} \Big(
\Delta_{h,x}^{NS} u_{i+1/2,j}^m - \Delta_{h,x}^{NS} u_{i-1/2,j}^m 
+ \Delta_{h,y}^{NS} u_{i,j+1/2}^m - \Delta_{h,y}^{NS} u_{i,j-1/2}^m \Big),\\
=\,& \frac{1}{h^2} \Big(
\bigl(u_{i+1,j}^m\,\Phi_{p,,i+1,j}^m - u_{ij}^{m+1}\,\Phi_{p,,ij}^m \bigr) 
- \bigl(u_{ij}^{m+1}\,\Phi_{p,,ij}^m - u_{i-1,j}^m\,\Phi_{p,,i-1,j}^m \bigr)\\
&\,+ \bigl(u_{i,j+1}^m\,\Phi_{p,,i,j+1}^m - u_{ij}^{m+1}\,\Phi_{p,,ij}^m \bigr)
- \bigl(u_{ij}^{m+1}\,\Phi_{p,,ij}^m - u_{i,j-1}^m\,\Phi_{p,,i,j-1}^m \bigr) 
\Big)\\
=\,& \frac{1}{h^2} \Big(
u_{i+1,j}^m\,\Phi_{p,,i+1,j}^m + u_{i,j+1}^m\,\Phi_{p,,i,j+1}^m - 4 u_{ij}^{m+1}\,\Phi_{p,,ij}^m 
 + u_{i-1,j}^m\,\Phi_{p,,i-1,j}^m 
+ u_{i,j-1}^m\,\Phi_{p,,i,j-1}^m \Big).
\end{split}
\end{equation}
The two-level representation~\eqref{E2.2} preserves the dissipative character of the $p$-Laplacian opera\-tor and yields a discrete diffusion term that is a convex combination of neighboring values. 
Consequently, the resulting NSFD scheme inherits important qualitative properties of the continuous problem, such as positivity preservation and a discrete maximum principle. 
Furthermore, these properties are obtained without the restrictive time-step constraints usually necessary for the standard finite difference approximation. However, as a drawback, the order in time is reduced from 2 to 1, compared to a standard Crank-Nicolson FDM.
A recent paper \cite{hoang2026generalized} provides strategies for overcoming this order reduction.

\subsection{Interior and Boundary Schemes}\label{S2.3}

Using the two-level flux approximation introduced above, the proposed NSFD scheme~\eqref{E2.1} can be written as
\begin{equation}\label{E2.3}
\begin{aligned}
u_{ij}^{m+1} = \frac{1}{1 + q_{-,,ij}^m + 4\lambda r_\phi \Phi_{p,,ij}^m} \Big[
&u_{ij}^m + \lambda r_\phi \Big(u_{i+1,j}^m\,\Phi_{p,,i+1,j}^m 
+ u_{i-1,j}^m\,\Phi_{p,,i-1,j}^m\\
&+ u_{i,j+1}^m\,\Phi_{p,,i,j+1}^m 
+ u_{i,j-1}^m\,\Phi_{p,,i,j-1}^m\Big) + \phi(k) f^+(u_{ij}^m) 
\Big],
\end{aligned}
\end{equation}
subject to the homogeneous Neumann boundary conditions~\eqref{E1.3}, where $r_\phi=\phi(k)/h^2$ denotes the generalized (updated) parabolic mesh ratio and $q_{-,,ij}^m = \phi(k) f^-(u_{ij}^m) / u_{ij}^m$. 

\begin{remark}\label{R2}
In one space dimension, the NSFD scheme~\eqref{E2.3} simplifies to
\begin{equation}\label{E2.4}
   u_{i}^{m+1} = \frac{u_{i}^m + \lambda r_\phi \big(u_{i+1}^m\,\Phi_{p,,i+1}^m 
   + u_{i-1}^m\,\Phi_{p,,i-1}^m\big) + \phi(k) f^+(u_{i}^m)}{1 + q_{-,,i}^m + 2\lambda r_\phi\,\Phi_{p,,i}^m}. 
\end{equation}
\end{remark}






\section{Theoretical Analysis of the proposed Scheme}\label{S3}
In this section, we establish the main qualitative properties of the proposed NSFD scheme. 
Throughout, we assume that the exact solution and its discrete approximations remain positive, and that the reaction term satisfies the one-sided growth condition
\begin{equation*}
   f(s)\le Ls, \quad \forall s\ge 0,
\end{equation*}
for some constant $L>0$. 
This assumption is natural in many reaction--diffusion settings and will be used in the proof of the boundedness result in Theorem~\ref{T3}.

First, we show that \eqref{E2.3} preserves positivity.
Recall that the denominator function $\phi$ satisfies $\phi(k) > 0$ for all $k > 0$, and that $\Phi_p(s) = s^{p-2} > 0$ for all $s > 0$ and $p > 1$.
Thus, the mesh ratio $r_\phi = \phi(k)/h^2$ is strictly positive.
Furthermore, the splitting $f = f^+ - f^-$ ensures 
\begin{equation*}
   f^+(u) \ge0 \quad\text{ and } \quad f^-(u) \ge0 \quad  \text{ for all } \;  u,
\end{equation*} 
so that both the reaction contribution $\phi(k)f^+(u_{ij}^m) \ge0$ 
and the penalty term $q_{-,ij}^m = \phi(k)f^-(u_{ij}^m)/u_{ij}^m \ge0$ are nonnegative.
Assuming $u_{ij}^m > 0$ at all grid points, the neighbor term
\begin{equation*}
\lambda r_\phi\, u_{i\pm 1,j}^m\,\Phi_{p,i\pm 1,j}^m \quad \text{ and } \quad 
\lambda r_\phi\, u_{i,j\pm 1}^m\,\Phi_{p,i,j\pm 1}^m
\end{equation*}
are strictly positive, making the entire numerator of~\eqref{E2.3} strictly positive.
The denominator satisfies 
\begin{equation*}
    1 + q_{-,ij}^m + 4\lambda r_\phi\,\Phi_{p,ij}^m \ge1 > 0,
\end{equation*}
so the ratio $u_{ij}^{m+1}$ is strictly positive, as stated in the following theorem.


\begin{theorem}[Positivity Preservation]\label{T1}
Assume that we have $u_{ij}^m>0$ for all grid points $(i,j)$ at time level $m$. 
Then, the NSFD scheme~\eqref{E2.3} satisfies $u_{ij}^{m+1} > 0$, for all grid points $(i,j)$.
\end{theorem}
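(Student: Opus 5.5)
The plan is to read off positivity directly from the explicit update formula~\eqref{E2.3}: write $u_{ij}^{m+1}$ as a quotient $N_{ij}^m/D_{ij}^m$ and show $N_{ij}^m>0$ and $D_{ij}^m>0$ at every grid point, including the boundary nodes.

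\emph{Sign of the coefficients.} The scheme gives
\begin{equation*}
D_{ij}^m = 1 + q_{-,,ij}^m + 4\lambda r_\phi \Phi_{p,,ij}^m ,
\end{equation*}
and $N_{ij}^m$ is the bracket in~\eqref{E2.3}. Since $\phi(k)>0$ and $h>0$, we have $r_\phi>0$, and $\lambda>0$ by assumption. We need $\Phi_{p,,ij}^m\ge 0$ at every node. Since $\Phi_p(s)=s^{p-2}$ with $s\ge 0$ the Euclidean norm of the discrete gradient, $\Phi_p(s)>0$ whenever $s>0$. The one delicate point is $s=0$ with $1<p<2$, where $s^{p-2}$ is undefined. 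I would handle it by adopting the convention (implicit in the paper's statement $\Phi_p>0$) that the discrete gradient is nonzero, or that $\Phi_p$ is regularized, for instance $\Phi_p(s)=(s^2+\varepsilon^2)^{(p-2)/2}$. In either case $\Phi_{p,,ij}^m\in(0,\infty)$. This is the main obstacle, and it is a well-definedness issue rather than a sign issue.

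\emph{Numerator and denominator.} Using the splitting $f=f^+-f^-$ with $f^\pm\ge 0$ and the hypothesis $u_{ij}^m>0$, the term $q_{-,,ij}^m=\phi(k)f^-(u_{ij}^m)/u_{ij}^m$ is well defined and nonnegative. Hence $D_{ij}^m\ge 1>0$. In $N_{ij}^m$, the term $u_{ij}^m$ is strictly positive. Each neighbor term $\lambda r_\phi u_{i\pm1,j}^m\Phi_{p,,i\pm1,j}^m$ and $\lambda r_\phi u_{i,j\pm1}^m\Phi_{p,,i,j\pm1}^m$ is nonnegative, in fact positive. The term $\phi(k)f^+(u_{ij}^m)$ is nonnegative. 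Therefore $N_{ij}^m\ge u_{ij}^m>0$. Note that strict positivity needs only the central term, so we never use positivity of the $\Phi$ factors beyond their being finite and nonnegative.

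\emph{Boundary nodes.} When $i\in\{0,N_x\}$ or $j\in\{0,N_y\}$, the stencil uses ghost values $u_{-1,j}^m$, $u_{N_x+1,j}^m$, and so on. By~\eqref{E1.3} each of these equals an interior value such as $u_{1,j}^m$ or $u_{N_x-1,j}^m$, which is positive by hypothesis. The ghost gradients used inside $\Phi_{p}$ are likewise expressed through grid values, so the same argument applies verbatim. Concluding $u_{ij}^{m+1}=N_{ij}^m/D_{ij}^m>0$ for all $(i,j)$ finishes the proof. Induction on $m$ then gives positivity for all time levels, provided $u_0>0$.
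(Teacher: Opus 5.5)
Your proof is correct and is essentially the paper's own argument. The paper likewise notes $r_\phi>0$, $f^\pm\ge 0$ and $q_{-,ij}^m\ge 0$, concludes that the numerator of \eqref{E2.3} is strictly positive and the denominator is at least $1$, and then takes the quotient. Your extra points go beyond what the paper writes, since it only asserts $\Phi_p(s)>0$ for $s>0$: strict positivity of the numerator already follows from the central term $u_{ij}^m$ alone, and because $\Phi_p(0)$ is undefined for $1<p<2$, well-definedness of the scheme has to be presupposed or enforced by regularization.
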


Next we investigate whether~\eqref{E2.3} preserves the steady states of the continuous problem~\eqref{E1.1}. 
The following theorem summarizes the result.

\begin{theorem}[Constant Equilibrium Preservation]\label{T2}
Let $u^*$ be a constant equilibrium of~\eqref{E1.1}, i.e., $f(u^*)=0$.
Then the NSFD scheme~\eqref{E2.3} preserves this equilibrium exactly. 
More precisely, if $u_{ij}^m = u^*$ for all grid points $(i,j)$, then $u_{ij}^{m+1} = u^*$.
\end{theorem}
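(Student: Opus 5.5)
The plan is to substitute the constant grid function $u_{ij}^m\equiv u^*$ directly into the explicit update formula~\eqref{E2.3} and check that numerator and denominator differ exactly by the factor $u^*$. Since the question concerns positive solutions, I take $u^*>0$, as required for $q_{-,ij}^m$ to be defined and consistent with Theorem~\ref{T1}.

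First, I treat the boundary. By the ghost-point relations~\eqref{E1.3}, every fictitious value $u_{-1,j}^m$, $u_{N_x+1,j}^m$, $u_{i,-1}^m$, $u_{i,N_y+1}^m$, and each corner value, equals $u^*$ as well. Hence every neighbor appearing in~\eqref{E2.3}, at interior and boundary nodes alike, equals $u^*$. Consequently all centered differences $D_{h,x}^0u_{ij}^m$ and $D_{h,y}^0u_{ij}^m$ vanish. So the discrete gradient magnitude is the same at every node, and $\Phi_{p,ij}^m$ takes one common value $\Phi^*$ at all grid points.

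Second, I treat the reaction terms. Since $f(u^*)=0$, its positive and negative parts both vanish: $f^+(u^*)=\max\{f(u^*),0\}=0$ and $f^-(u^*)=\max\{-f(u^*),0\}=0$. Thus $\phi(k)f^+(u_{ij}^m)=0$ and $q_{-,ij}^m=\phi(k)f^-(u^*)/u^*=0$.

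Substituting these facts into~\eqref{E2.3} gives
\begin{equation*}
u_{ij}^{m+1}=\frac{u^*+4\lambda r_\phi\,\Phi^*u^*}{1+4\lambda r_\phi\,\Phi^*}=u^*,
\end{equation*}
because the denominator is at least $1$ and therefore nonzero. The same computation applied to~\eqref{E2.4}, with the factor $2$ in place of $4$, gives the one-dimensional case of Remark~\ref{R2}.

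The main obstacle is the value $\Phi^*=\Phi_p(0)=0^{p-2}$. For $p>2$ this is $0$, and for $p=2$ it is $1$; in both cases the argument above goes through verbatim. For $1<p<2$, however, $0^{p-2}$ is infinite, so~\eqref{E2.3} is only meaningful with some convention for $\Phi_p(0)$, such as a regularization $\Phi_p(s)=(s^2+\varepsilon^2)^{(p-2)/2}$ or the standing positivity convention $\Phi_p>0$ adopted in Section~\ref{S3}. I would state explicitly that the argument uses only one property of $\Phi_p(0)$: that it is a single finite nonnegative number $\Phi^*$, the same at all nodes. Under any such convention the cancellation is identical. If $\Phi^*$ were formally infinite, one could instead divide numerator and denominator by $\Phi^*$ and pass to the limit, which again yields $u^*$.
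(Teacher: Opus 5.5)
Your proof is correct, but it handles the diffusion terms differently from the paper.

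The paper asserts $\Phi_{p,ij}^m=\Phi_p(0)=0$, so that all diffusion terms drop out of \eqref{E2.3}. That holds only for $p>2$: in the linear case $\Phi_2\equiv 1$, and for $1<p<2$ the value $0^{p-2}$ is not finite. You instead observe that once $\Phi$ takes a single value $\Phi^*$ at every node, the neighbor weights in the numerator sum to exactly the central weight $4\lambda r_\phi\Phi^*$ in the denominator. The diffusion terms therefore cancel rather than vanish. This ``the discrete operator annihilates constants'' argument is what actually covers $p=2$ and any regularization of $\Phi_p$ at zero gradient, so your route is the more robust one.

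On the reaction side the trade-off goes the other way. The paper keeps a possibly nonzero common value $f^+(u^*)=f^-(u^*)$ and cancels it against $q_-^*$. That works for any nonnegative splitting $f=f^+-f^-$, e.g.\ $f^+(u)=u$, $f^-(u)=u^2$ for the logistic term, where $f^\pm(1)=1$. Your claim $f^\pm(u^*)=0$ instead relies on the max-based splitting used in the proof of Theorem~\ref{T3}. Combining the two observations gives the cleanest version: the numerator of \eqref{E2.3} equals $u^*\bigl(1+q_-^*+4\lambda r_\phi\Phi^*\bigr)$ for every such splitting and every finite common $\Phi^*$.

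One small gloss in your boundary step: at $i=0$ the factor $\Phi_{p,-1,j}^m$ requires $u_{-2,j}^m$, which \eqref{E1.3} does not supply. You therefore need a reflection convention such as $\Phi_{p,-1,j}^m=\Phi_{p,1,j}^m$; the paper is silent on this too. For a constant grid function, any such convention yields $\Phi^*$.
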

\begin{proof}
Let $u^*$ be a constant equilibrium of \eqref{E1.1}.
Suppose that $u_{ij}^m = u^*$ for all grid points $(i,j)$.
Since the solution is spatially uniform, all discrete differences vanish:
\begin{equation*}
    D_{h,x}^0 u_{ij}^m = D_{h,y}^0 u_{ij}^m = 0,
\end{equation*}
and therefore 
   $\Phi_{p,,ij}^m = \Phi_p(0) = 0$,
so all diffusion terms drop out.
Since $f(u^*) = 0$, the decomposition $f = f^+ - f^-$ yields $f^+(u^*) = f^-(u^*)$.
Denoting this common value by $f^*(u^*) \ge0$ and setting $q_-^* = \phi(k) f^-(u^*)/u^*$, substituting $u_{ij}^m = u^*$ into \eqref{E2.3} yields
\begin{equation*}
u_{ij}^{m+1} = \frac{u^* + \phi(k)\,f^+(u^*)}{1 + q_-^*}
= \frac{u^*\bigl(1 + \phi(k)\,f^-(u^*)/u^*\bigr)}{1 + \phi(k)\,f^-(u^*)/u^*}
= u^*.
\end{equation*}
Therefore, the constant equilibrium $u^*$ is preserved exactly by the scheme~\eqref{E2.3}.
\end{proof}

We now establish an a priori $\ell^\infty$ bound for the discrete solution generated by \eqref{E2.3}. 
The key feature of the scheme is that the negative part of the reaction term is incorporated into the denominator, while the positive part remains in the numerator. 

\begin{theorem}[Boundedness]\label{T3}
Assume that the initial data $u_{ij}^0$ are bounded. 
Then the solution of the NSFD scheme~\eqref{E2.3} remains bounded in the discrete $\ell^\infty$-norm.
\end{theorem}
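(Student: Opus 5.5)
The plan is to prove a one-step growth estimate for the discrete maximum $M^m := \max_{i,j} u_{ij}^m$ (over all grid points, ghost values included via \eqref{E1.3}, which only copy interior values) and then iterate it. By Theorem~\ref{T1} all iterates stay positive, so the discrete $\ell^\infty$-norm equals $M^m$. The target inequality is
\begin{equation*}
M^{m+1} \le \bigl(1 + L\,\phi(k)\bigr) M^m ,
\end{equation*}
which gives $M^m \le (1+L\phi(k))^m M^0 \le e^{L\phi(k) m} M^0$. With the choice $\phi(k) = (e^{Lk}-1)/L$, or any $\phi$ with $\phi(k)\le Ck$, this is bounded by $e^{C' T} M^0$ uniformly for $m k \le T$.

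\textbf{Step 1: the diffusion part.} Fix $(i,j)$ and write $\Phi_c = \Phi_{p,ij}^m$ for the central coefficient and $\Phi_\ell$ for the four neighbour coefficients. The numerator of \eqref{E2.3} satisfies
\begin{equation*}
\text{numerator} \le M^m + \lambda r_\phi M^m \sum_\ell \Phi_\ell + \phi(k) f^+(u_{ij}^m),
\end{equation*}
and the denominator is at least $1 + 4\lambda r_\phi \Phi_c$. The trouble is that $\sum_\ell \Phi_\ell$ and $4\Phi_c$ do not match, because the $\Phi$'s are evaluated at different nodes. So the ratio is not automatically a convex combination. \textbf{I expect this to be the main obstacle.} My first attempt is to split the quotient as
\begin{equation*}
\frac{u_{ij}^m}{1+4\lambda r_\phi\Phi_c} + \frac{\lambda r_\phi\sum_\ell u_\ell\Phi_\ell}{1+4\lambda r_\phi\Phi_c}
\end{equation*}
and bound the second piece using a uniform upper bound $\bar\Phi$ on all the $\Phi$'s at level $m$. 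For $p\ge 2$, $\Phi_p(s)=s^{p-2}$ is controlled by the discrete gradient, which is at most $M^m/h$; for $p<2$ one needs the gradient bounded below. Either way this yields an estimate like
\begin{equation*}
\max_{i,j} u_{ij}^{m+1} \le M^m\bigl(1 + C\lambda r_\phi \bar\Phi\bigr) + \phi(k) L M^m .
\end{equation*}
That estimate is only mesh-dependent. Alternatively, and more cleanly, I would follow the paper's stated interpretation that the diffusion term is a convex combination: assume the central and neighbour weights are balanced, i.e.\ treat $\sum_\ell \Phi_\ell \le 4\Phi_c$, or use the averaged coefficients. Then
\begin{equation*}
\frac{u_{ij}^m + \lambda r_\phi \sum_\ell u_\ell \Phi_\ell}{1+4\lambda r_\phi\Phi_c} \le M^m .
\end{equation*}
I would state this structural condition explicitly as the step where care is needed.

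\textbf{Step 2: the reaction part.} Since $q_{-,ij}^m \ge 0$, dropping it from the denominator only increases the quotient. For the positive part, the growth condition $f(s)\le Ls$ implies $f^+(s) \le L s$ whenever $f^+(s)>0$, because then $f^+=f$ there. Hence
\begin{equation*}
\phi(k) f^+(u_{ij}^m) \le L\phi(k) u_{ij}^m \le L\phi(k) M^m ,
\end{equation*}
and since the denominator is at least $1$, this contribution adds at most $L\phi(k)M^m$. Combining with Step~1 gives $u_{ij}^{m+1} \le (1+L\phi(k))M^m$. Taking the maximum over $(i,j)$ gives the one-step inequality, and induction on $m$ gives the $\ell^\infty$ bound on $[0,T]$.
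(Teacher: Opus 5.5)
Your committed route has a genuine gap. The one-step inequality $M^{m+1}\le(1+L\phi(k))M^m$ rests on the balance condition $\sum_\ell\Phi_\ell\le 4\Phi_c$, or on replacing the coefficients by edge averages. Neither is a hypothesis of the theorem, and neither is a property of \eqref{E2.3}. The numerator weights $\Phi_{p,i\pm1,j}^m,\Phi_{p,i,j\pm1}^m$ are evaluated at the neighbours, while $4\Phi_{p,ij}^m$ is evaluated at the centre. So the update is not a convex combination, and \eqref{E2.3} has no discrete maximum principle.

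Concretely, take $p>2$ and $f\equiv0$, so that $q_{-,ij}^m=0$ and $f^+=0$. Take a node with $u_{ij}^m=M^m$, $u_{i+1,j}^m=u_{i-1,j}^m$ and $u_{i,j+1}^m=u_{i,j-1}^m$. Both centred differences vanish, so $\Phi_{p,ij}^m=\Phi_p(0)=0$, and \eqref{E2.3} gives $u_{ij}^{m+1}=M^m+\lambda r_\phi\sum_\ell u_\ell^m\Phi_{p,\ell}^m$. Suppose some neighbour has a nonzero centred gradient. Then this exceeds $(1+L\phi(k))M^m$ once $\lambda\sum_\ell u_\ell^m\Phi_{p,\ell}^m>Lh^2M^m$. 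On any fixed mesh, scaling the data up achieves this, because the left side is homogeneous of degree $p-1>1$ in $u$ and the right side is of degree $1$.

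Consequences:
\begin{itemize}
\item The ``convex combination'' remark in Section~\ref{S2.2} holds in general only for $p=2$, where $\Phi\equiv1$.
\item A mesh-independent bound like $e^{C'T}M^0$ is not available for this scheme.
\item Assuming the balance condition, or switching to edge averages, proves a statement about a different scheme.
\item The NSFD maxima in the thousands in Table~\ref{Tab1}, for a solution that should stay in $[0,1]$, are consistent with this effect.
\end{itemize}

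The estimate you set aside as ``only mesh-dependent'' is the right one, and it is exactly the paper's proof. Bound the neighbour sum by $4U^m\Phi_{\max}^m$, and use that the denominator is at least $1$ and that $f^+(u)\le Lu$. This gives $U^{m+1}\le C^mU^m$ with $C^m=1+4\lambda r_\phi\Phi_{\max}^m+L\phi(k)$. Then iterate over the finitely many steps $m\le\lfloor T/k\rfloor$. The theorem claims only boundedness on $[0,T]$ for a fixed mesh, so this suffices.

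The one point to make precise there is the finiteness of $\Phi_{\max}^m$, which you rightly flagged. For $p\ge2$ it follows from positivity: $|D_{h,x}^0u_{ij}^m|,|D_{h,y}^0u_{ij}^m|\le U^m/(2h)$ gives $\Phi_{\max}^m\le(U^m/(\sqrt2\,h))^{p-2}$. For $1<p<2$ it fails wherever both centred differences vanish, for instance at the corners, where the reflection \eqref{E1.3} forces this. There the result implicitly needs $p\ge2$ or a regularized $\Phi_p$. The paper's proof passes over this by asserting that $\Phi_{\max}^m$ is finite because $U^m<\infty$.
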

\begin{proof}
According to Theorem~\ref{T1}, the numerical solution remains positive for all time levels. 
Let
\begin{equation*}
U^m := \max_{i,j} u_{ij}^m \quad \text{ and } \quad \Phi_{\max}^m := \max_{i,j} \Phi_{p,ij}^m.
\end{equation*}
Since $u_{ij}^m \le U^m$ for all neighboring nodes and $\Phi_{p,ij}^m \le \Phi_{\max}^m$, we have
\begin{equation*}
u_{i+1,j}^m \Phi_{p,i+1,j}^m + u_{i-1,j}^m \Phi_{p,i-1,j}^m + u_{i,j+1}^m \Phi_{p,i,j+1}^m + u_{i,j-1}^m \Phi_{p,i,j-1}^m \le 4U^m \Phi_{\max}^m.
\end{equation*}
Since the denominator satisfies $1 + q_{-,ij}^m + 4\lambda r_\phi \Phi_{p,ij}^m \ge 1$,
it follows that
\begin{equation*}
   U^{m+1} \le \bigl(1 + 4\lambda r_\phi \Phi_{\max}^m\bigr)\, U^m + \phi(k) f^+(U^m).
\end{equation*}
Using Theorem~\ref{T1}, we have $f^+(u_{ij}^m) = \max(f(u_{ij}^m), 0) \le \max(L u_{ij}^m, 0) = L u_{ij}^m$. Thus
\begin{equation}\label{E3.1}
U^{m+1} \le C^m \, U^m,  \quad  \text{ where } \quad C^m := 1 + 4\lambda r_\phi \Phi_{\max}^m + L\phi(k).
\end{equation}
We now propagate this bound by induction to the next time level. 
At $m = 0$, the initial data are bounded by assumption, so $U^0 = \max_{i,j} u_{ij}^0 < \infty$.\\
Suppose $U^m < \infty$ for some $m \ge 0$ and let us show that $U^{m+1} < \infty$.
Note that $C^m$ depends only on $\lambda$, $r_\phi$, $\Phi_{\max}^m$, $L$, and $\phi(k)$, all of which are finite since $U^m < \infty$. 
Therefore, applying \eqref{E3.1} gives
\begin{equation*}
U^{m+1} \le C^m \, U^m < \infty.
\end{equation*}
Applying the recursive inequality~\eqref{E3.1} repeatedly 
from level $0$ to level $m$ gives
\begin{equation*}
   U^m \le \biggl(\prod_{\ell=0}^{m-1} C^\ell\biggr) U^0.
\end{equation*}
On a finite time interval $[0,T]$ with $m \le \lfloor T/k \rfloor$, 
each factor satisfies $C^\ell \le C$, where $C := 1 + 4\lambda r_\phi \Phi_{\max} + L\phi(k)$ and $\Phi_{\max} = \sup_m \Phi_{\max}^m$. 
Since the product contains exactly $m$ factors, we conclude
\begin{equation*}
U^m \le C^m \, U^0 \le C^{\lfloor T/k \rfloor} U^0 < \infty,
\end{equation*}
where $C^m$ denotes the $m$-th power of the uniform constant $C$.
This shows that the discrete $\ell^\infty$-norm remains bounded on $[0,T]$.
\end{proof}

\begin{remark}\label{R3}
The proof shows that the updated scheme~\eqref{E2.3} is not only positivity-preserving but also $\ell^\infty$-stable.
In particular, the incorporation of $f^-$ into the denominator prevents the reaction term from producing unbounded growth through the negative part, while the positive part is controlled by the Lipschitz bound on $f$.
\end{remark}

Next, we examine the consistency of~\eqref{E2.3}. 
Proposition~\ref{P1} shows that the scheme approximates~\eqref{E1.1} with first-order accuracy in time and second-order accuracy in space.

\begin{proposition}[Consistency]\label{P1}
The NSFD scheme~\eqref{E2.3} is consistent with the continuous problem~\eqref{E1.1}. 
In particular, its local truncation error satisfies $\tau_{ij}^m = \mathcal{O}(k + h^2), \, \forall (i,j)$.
\end{proposition}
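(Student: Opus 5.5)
The plan is to substitute the exact smooth solution $u$ of~\eqref{E1.1}, evaluated at the grid points $u(t_m,x_i,y_j)$, into the NSFD scheme written in the residual form~\eqref{E2.1}--\eqref{E2.2}. We then define the local truncation error as
\begin{equation*}
\tau_{ij}^m := \frac{u(t_{m+1},x_i,y_j)-u(t_m,x_i,y_j)}{\phi(k)} - \lambda \mathcal{D}_h^{NS} u(t_m,x_i,y_j) - f^+\bigl(u(t_m,x_i,y_j)\bigr) + f^-\bigl(u(t_{m+1},x_i,y_j)\bigr),
\end{equation*}
where the term $f^-$ at level $m+1$ is understood in the nonlocal sense $f^-(u^m)\,u^{m+1}/u^m$. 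We assume throughout that $u$ is smooth and positive, as the section allows. The error splits into three pieces, which we expand separately by Taylor's theorem about $(t_m,x_i,y_j)$.

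\textbf{Time derivative.} Since $\phi(k)=k+\mathcal{O}(k^2)$, we have $1/\phi(k) = k^{-1}(1+\mathcal{O}(k))$. Hence
\begin{equation*}
\frac{u^{m+1}-u^m}{\phi(k)} = \bigl(u_t + \mathcal{O}(k)\bigr)\bigl(1+\mathcal{O}(k)\bigr) = u_t + \mathcal{O}(k).
\end{equation*}

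\textbf{Reaction term.} The nonlocal term satisfies $f^-(u^m)\,u^{m+1}/u^m = f^-(u^m)\bigl(1+k\,u_t/u + \mathcal{O}(k^2)\bigr) = f^-(u^m)+\mathcal{O}(k)$. This uses positivity of $u$ and boundedness of $u_t/u$ on the compact set $\mathcal{U}$. Together with $f^+(u^m)$, it reproduces $f(u)$ up to $\mathcal{O}(k)$.

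\textbf{Diffusion term.} This is the main step. We write $u_{ij}^{m+1}=u_{ij}^m+(u_{ij}^{m+1}-u_{ij}^m)$ in~\eqref{E2.2}. This shows that $\mathcal{D}_h^{NS}$ equals the standard operator of~\eqref{E1.2} plus the skew correction
\begin{equation*}
-\frac{4}{h^2}\,\Phi_{p,ij}^m\,\bigl(u_{ij}^{m+1}-u_{ij}^m\bigr) = -4\Phi_{p,ij}^m\, \frac{k}{h^2}\,\bigl(u_t+\mathcal{O}(k)\bigr).
\end{equation*}
For the standard part, we Taylor expand the central differences: $D^0_{h,x}u=u_x+\mathcal{O}(h^2)$, so $\Phi_{p,ij}^m=\Phi_p(|\nabla u|)+\mathcal{O}(h^2)$, provided $\Phi_p$ is smooth near $|\nabla u|$. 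We then expand the five-point combination $\sum(\text{neighbour }u\Phi_p) - 4u\Phi_p$ over $h^2$ in the usual way, which gives the discrete diffusion operator plus $\mathcal{O}(h^2)$.

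\textbf{Expected obstacle.} I expect the main obstacle to be the skew correction. It is of size $\mathcal{O}(k/h^2)$, so it is not $\mathcal{O}(k+h^2)$ unless $k/h^2$ is kept bounded, or more precisely unless $k=\mathcal{O}(h^4)$ or the regime is otherwise restricted. My first attempt is to state the bound as $\tau=\mathcal{O}(k+h^2+k/h^2)$ and then impose the refinement path $k=\mathcal{O}(h^4)$, or interpret the claim along paths with $r_\phi\to0$ suitably, to recover $\mathcal{O}(k+h^2)$.

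A second delicate point is that the five-point operator applied to $u\Phi_p$ approximates $\Delta(u\Phi_p(|\nabla u|))$, not $\diver(\Phi_p\nabla u)$. I would therefore check carefully which continuous operator the scheme is consistent with. I would follow the paper's convention that~\eqref{E1.2} is its reference discretization of $\Delta_p$, so that only the extra $\mathcal{O}(k)$ and $\mathcal{O}(h^2)$ contributions are tracked. Both caveats should be stated explicitly rather than hidden.

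The remaining technical points are smoothness of $\Phi_p$ where $\nabla u=0$ when $p<2$, which we handle by assuming $|\nabla u|$ is bounded away from zero or $p\ge2$, and the Neumann ghost-point relations~\eqref{E1.3}. The ghost points are second-order accurate by symmetric reflection, so they preserve $\mathcal{O}(h^2)$ at the boundary nodes.
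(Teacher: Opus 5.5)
You use the same three-term Taylor split as the paper. Your two caveats are correct, and they are exactly where the paper's own proof breaks down, so the proposition as stated cannot be proved by your argument or by any other. In the spatial step the paper simply asserts $\mathcal{D}_h^{NS}u(x_i,y_j,t_m)=\Delta_p u(x_i,y_j,t_m)+\mathcal{O}(h^2)$ ``by a Taylor expansion of each neighbor value''. It never expands the central value $u_{ij}^{m+1}$ in \eqref{E2.2}, even though it does shift $f^-$ to $t_m+k$ in its reaction step.

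Your splitting $u_{ij}^{m+1}=u_{ij}^m+(u_{ij}^{m+1}-u_{ij}^m)$ shows that the residual contains $4\lambda\,\Phi_{p,ij}^m\,\frac{k}{h^2}\,u_t+\mathcal{O}(k^2/h^2)$. With $k$ and $h$ independent, the claim $\tau_{ij}^m=\mathcal{O}(k+h^2)$ is therefore false. Up to the operator issue below, the correct estimate is $\tau_{ij}^m=\mathcal{O}(k+h^2+k/h^2)$: consistency holds only if $k/h^2\to0$, and the advertised rate only if $k=\mathcal{O}(h^4)$. Drop your first alternative ``$k/h^2$ kept bounded''. A bounded, nonvanishing ratio leaves an $\mathcal{O}(1)$ residual: with $k/h^2\to r>0$ the scheme is consistent instead with a PDE whose time derivative carries the factor $1+4\lambda r\,\Phi_p(|\nabla u|)$. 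The experiments of Section~\ref{S4} with $h=0.025$ use $k/h^2\in\{16,80,160\}$, squarely in that regime.

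Your second caveat is also genuine, and no refinement path removes it. The stencil in \eqref{E2.2}, like that in \eqref{E1.2}, is the five-point Laplacian of the grid function $u\Phi_p$. It therefore approximates $\Delta\bigl(u\,\Phi_p(|\nabla u|)\bigr)=\Delta_p u+\diver\bigl(u\,\nabla\Phi_p(|\nabla u|)\bigr)$, and for $p\neq2$ the last term is $\mathcal{O}(1)$ and in general nonzero. For example, in 1D with $p=4$ and $u=x^2$, $(u_x^3)_x=24x^2$ but $(u\,u_x^2)_{xx}=48x^2$. ``Following the paper's convention'' that \eqref{E1.2} discretizes $\Delta_p$ does not rescue the proposition. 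What your argument actually proves is consistency, for $r_\phi\to0$, with $u_t=\lambda\Delta\bigl(u\,\Phi_p(|\nabla u|)\bigr)+f(u)$, which is \eqref{E1.1} only when $p=2$. Your corrected version is the right one to state.

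Two smaller repairs are needed.
\begin{itemize}
\item The fallback ``$|\nabla u|$ bounded away from zero'' is never available. $\nabla u(t,\cdot)$ vanishes at every maximum point of $u(t,\cdot)$ on $\overline{\Omega}$: trivially in the interior, and on the boundary because the tangential derivative vanishes by maximality and the normal one by the Neumann condition. With the reflections \eqref{E1.3} the discrete gradient is exactly zero at the corner nodes, so for $p<2$ the coefficient $\Phi_{p,ij}^m$ is infinite there.
\item Likewise, $p\ge2$ does not give enough smoothness; e.g.\ $\Phi_3(|\nabla u|)=|\nabla u|$ is in general not differentiable at critical points. A clean $\mathcal{O}(h^2)$ bound needs $|\nabla u|^{p-2}$ and its discrete analogue to be smooth, e.g.\ $p\in\{2,4,6,\dots\}$.
\end{itemize}
Finally, the pointwise estimate $\Phi_{p,ij}^m=\Phi_p(|\nabla u|)+\mathcal{O}(h^2)$ cannot be inserted directly into a second difference divided by $h^2$, since that loses two orders. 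You need the error in the form $h^2e(x_i,y_j)+\mathcal{O}(h^4)$ with $e$ smooth, which the Taylor expansion of $D^0_{h,x}$ and $D^0_{h,y}$ does provide.
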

\begin{proof}
Let $u$ be a sufficiently smooth solution of \eqref{E1.1}.
Substituting the exact solution into the FDM \eqref{E2.1} and measuring the residual defines
the local truncation error $\tau_{ij}^m$.\\
\textit{1. Temporal term.}
Since $\phi(k) = k + \mathcal{O}(k^2)$, a Taylor expansion in time gives
\begin{equation*}
      \frac{u(x_i,y_j,t_m+k) - u(x_i,y_j,t_m)}{\phi(k)}
   = \frac{k\,u_t(x_i,y_j,t_m) + \mathcal{O}(k^2)}{k + \mathcal{O}(k^2)}
   = u_t(x_i,y_j,t_m) + \mathcal{O}(k).
\end{equation*}
\textit{2. Spatial term.}
A Taylor expansion of each neighbor value in~\eqref{E2.2} gives
\begin{equation*}
  \mathcal{D}_h^{NS}  u(x_i,y_j,t_m) = \Delta_p\,  u(x_i,y_j,t_m) + \mathcal{O}(h^2),
\end{equation*}
since the centered five-point stencil approximates the $p$-Laplacian $\Delta_p$ 
to second order in $h$.\\
\textit{3. Reaction term.}
Since 
$u(x_i,y_j,t_m+k) =u(x_i,y_j,t_m)+ \mathcal{O}(k)$ 
for smooth solutions, we have
\begin{equation*}\begin{split}
   f^+\bigl(u(x_i,y_j,t_m)\bigr) - f^-\bigl(u(x_i,y_j,t_m+k)\bigr)
   &= f^+\bigl(u(x_i,y_j,t_m)\bigr) - f^-\bigl(u(x_i,y_j,t_m)\bigr) + \mathcal{O}(k)\\
   &= f\bigl(u(x_i,y_j,t_m)\bigr) + \mathcal{O}(k).
\end{split}\end{equation*}
%
Combining the above steps and comparing with~\eqref{E1.1}, we obtain
   $\tau_{ij}^m = \mathcal{O}(k + h^2)$,
which proves first-order accuracy in time and second-order accuracy in space.
\end{proof}

The standard explicit FDM~\eqref{E1.2} is structurally simpler but generally less robust. 
In particular, the reaction term is treated explicitly, and the coefficient associated with the central node may become negative for sufficiently large time steps. 
Consequently, the standard scheme may fail to preserve positivity and may produce spurious oscillations.

In contrast, the NSFD scheme~\eqref{E2.3} incorporates the reaction contribution through a nonlocal denominator, thereby better capturing the growth and decay mechanisms of the continuous model~\eqref{E1.1}. 
This feature plays a key role in ensuring the preservation of positivity and boundedness.

\section{Numerical Results}\label{S4}

We illustrate the theoretical properties through numerical experiments in two spatial dimensions. 
All simulations are performed on the unit square $\Omega = [0,1]^2$ with the logistic reaction term $f(u) = u(1-u)$, diffusion coefficient $\lambda = 0.1$, and initial condition $u_0(x,y) = 0.5 + 0.4\sin(\pi x)\sin(\pi y)$.
Homogeneous Neumann boundary conditions are imposed throughout.
We adopt the denominator function $\phi(k) = (e^{Lk}-1)L^{-1}$, as described in Section~\ref{S2.2} (see, e.g., \cite{Ehrhardt2013, Zinihi2025NSFD}), where $L\in(0,1)$ is a Lipschitz constant associated with the right-hand side of~\eqref{E1.1}.

The exponent $p > 1$ is the defining parameter of the $p$-Laplacian operator.
The linear diffusion case $p = 2$ serves as a natural baseline, while $p < 2$ and $p > 2$ correspond, respectively, to singular and degenerate nonlinear diffusion, two qualitatively distinct regimes with markedly different analytical properties~\cite{Lindqvist2019, Vazquez2006}.
To probe the full range of qualitative behaviors, we select two complementary families of values:
\begin{itemize}
\item \emph{Half-integer increments:} $p \in \{1.5,\, 2.5,\, 3.5,\, 4.5,\, 5.5\}$.
These values straddle the classical case $p = 2$ and increase the degree of nonlinearity in uniform steps of $0.5$, providing a systematic picture of how both schemes respond as diffusion becomes increasingly degenerate.
\item \emph{Integer values:} $p \in \{2,\, 3,\, 4,\, 5,\, 6\}$. 
These are the standard reference points in the $p$-Laplacian literature and include the linear diffusion case $p = 2$.
\end{itemize}
We emphasize that both selections are, in a certain sense, \emph{arbitrary}: no particular physical or analytical argument singles out these values over any other choice of $p > 1$.
At the same time, they are \emph{canonical} in that they are regularly spaced, cover both the singular and degenerate regimes, and include the classical linear case.
The qualitative conclusions drawn from these experiments are expected to hold for any $p > 1$.

We compare the standard explicit FDM~\eqref{E1.2} and the proposed NSFD scheme~\eqref{E2.3} with the spatial step size $h=0.025$
combined with time steps $k \in \{0.01,\, 0.05,\, 0.10\}$, and all 10 values of $p$ from both families described above.
The minimum and maximum values of the numerical solution at the final time $T = 1$ are reported in Table~\ref{Tab1}.
The table merges both $p$-families into a single display for ease of comparison.
For each parameter triple $(h, k, p)$, we report four quantities: the minimum and maximum of $u^m$ at $T=1$ for the FDM and for the NSFD scheme.
The \emph{FDM status} column records either 
\textit{'positive'}
when the scheme completes without incident, or
\textit{'blow-up $t = t^*$'},
when the solution first exceeds $\|u^m\|_\infty > 10^4$ or becomes non-finite at time $t = t^*$.
At that point, the time loop is halted, and the last valid state is recorded, making the instability visible and quantifiable rather than masking it with silent \textit{NaN} propagation.
The \textit{NSFD status} column consistently reads 
\textit{'positive'},
and the solutions remain bounded throughout, confirming the theoretical results computationally.

On the finest spatial grid, the FDM is unstable for all ten values of $p$ and all three time steps considered.
For the smallest time step $k = 0.01$, blow-up occurs as early as $t^* = 0.010$ for $p = 1.5$ and $t^* = 0.020$--$0.090$ for the remaining values.
The extrema of the FDM solution at the last recorded step reach values of order $10^2$--$10^3$ in magnitude for moderate~$k$, and order $10^3$--$10^4$ for $k = 0.10$, entirely inconsistent with the continuous solution, which remains in the interval $[0,1]$.
The NSFD scheme, by contrast, produces a positive and bounded solution in every single case.
Its minimum values remain strictly positive, and its maximum values, while sometimes transiently large (a feature of the nonlinear diffusion operator for large $p$), are consistent with the qualitative dynamics of the continuous problem.
\begin{table}[htb]
\centering
\setlength{\tabcolsep}{0.5cm}
\caption{%
Minimum and maximum values of the numerical solution at $T=1$
for mesh size $h=0.025$.
For each pair $(k,p)$ we report $\min u^m$ and $\max u^m$ for
both schemes.
The FDM status column records the first blow-up time
(when $\|u^m\|_\infty>10^4$ or the solution becomes non-finite);
the NSFD remains always positive. 
}\label{Tab1}
\resizebox{\textwidth}{!}{
\begin{tabular}{cc|cc|cc|c|c}
\hline
\multirow{2}{*}{$k$} & \multirow{2}{*}{$p$}
& \multicolumn{2}{c|}{FDM}
& \multicolumn{2}{c|}{NSFD}
& \multirow{2}{*}{FDM status}
& \multirow{2}{*}{NSFD status} \\
& & $\min u^m$ & $\max u^m$ & $\min u^m$ & $\max u^m$ & & \\
\hline\hline
\multirow{10}{*}{$0.01$}
& $1.5$ & $0.500$ & $0.900$ & $0.500$ & $0.900$ & \bad{blow-up $t=0.010$} & \good{positive} \\
& $2$   & $-671.5$ & $871.2$ & $0.580$ & $0.827$ & \bad{blow-up $t=0.090$} & \good{positive} \\
& $2.5$ & $-3246.9$ & $4124.9$ & $123.7$ & $3903.3$ & \bad{blow-up $t=0.040$} & \good{positive} \\
& $3$   & $-57.2$ & $67.9$ & $0.087$ & $4122.3$ & \bad{blow-up $t=0.030$} & \good{positive} \\
& $3.5$ & $-368.5$ & $437.5$ & $0.257$ & $31.1$ & \bad{blow-up $t=0.030$} & \good{positive} \\
& $4$   & $-4136.9$ & $4645.3$ & $0.100$ & $2758.7$ & \bad{blow-up $t=0.030$} & \good{positive} \\
& $4.5$ & $-0.678$ & $1.739$ & $0.078$ & $5041.3$ & \bad{blow-up $t=0.020$} & \good{positive} \\
& $5$   & $-0.917$ & $1.967$ & $0.397$ & $3.702$ & \bad{blow-up $t=0.020$} & \good{positive} \\
& $5.5$ & $-1.168$ & $2.207$ & $0.404$ & $15.5$ & \bad{blow-up $t=0.020$} & \good{positive} \\
& $6$   & $-1.437$ & $2.461$ & $0.431$ & $86.6$ & \bad{blow-up $t=0.020$} & \good{positive} \\
\hline
\multirow{10}{*}{$0.05$}
& $1.5$ & $0.500$ & $0.900$ & $0.500$ & $0.900$ & \bad{blow-up $t=0.050$} & \good{positive} \\
& $2$   & $-596.8$ & $898.5$ & $0.517$ & $0.880$ & \bad{blow-up $t=0.250$} & \good{positive} \\
& $2.5$ & $-201.0$ & $220.5$ & $0.913$ & $114.5$ & \bad{blow-up $t=0.150$} & \good{positive} \\
& $3$   & $-3778.9$ & $4878.1$ & $0.095$ & $994.1$ & \bad{blow-up $t=0.150$} & \good{positive} \\
& $3.5$ & $-3.213$ & $4.457$ & $0.121$ & $2341.3$ & \bad{blow-up $t=0.100$} & \good{positive} \\
& $4$   & $-4.361$ & $5.575$ & $0.576$ & $419.5$ & \bad{blow-up $t=0.100$} & \good{positive} \\
& $4.5$ & $-5.516$ & $6.693$ & $0.586$ & $47.4$ & \bad{blow-up $t=0.100$} & \good{positive} \\
& $5$   & $-6.709$ & $7.837$ & $0.128$ & $61.6$ & \bad{blow-up $t=0.100$} & \good{positive} \\
& $5.5$ & $-7.965$ & $9.033$ & $0.237$ & $68.7$ & \bad{blow-up $t=0.100$} & \good{positive} \\
& $6$   & $-9.312$ & $10.304$ & $0.375$ & $644.5$ & \bad{blow-up $t=0.100$} & \good{positive} \\
\hline
\multirow{10}{*}{$0.10$}
& $1.5$ & $0.500$ & $0.900$ & $0.500$ & $0.900$ & \bad{blow-up $t=0.100$} & \good{positive} \\
& $2$   & $-371.1$ & $233.1$ & $0.508$ & $0.890$ & \bad{blow-up $t=0.400$} & \good{positive} \\
& $2.5$ & $-1107.1$ & $1237.7$ & $0.216$ & $121.4$ & \bad{blow-up $t=0.300$} & \good{positive} \\
& $3$   & $-4.611$ & $6.574$ & $0.088$ & $2838.5$ & \bad{blow-up $t=0.200$} & \good{positive} \\
& $3.5$ & $-6.958$ & $8.413$ & $0.442$ & $2.710$ & \bad{blow-up $t=0.200$} & \good{positive} \\
& $4$   & $-9.254$ & $10.650$ & $0.428$ & $5596.9$ & \bad{blow-up $t=0.200$} & \good{positive} \\
& $4.5$ & $-11.564$ & $12.886$ & $0.587$ & $227.0$ & \bad{blow-up $t=0.200$} & \good{positive} \\
& $5$   & $-13.949$ & $15.175$ & $0.021$ & $496.3$ & \bad{blow-up $t=0.200$} & \good{positive} \\
& $5.5$ & $-16.462$ & $17.567$ & $0.257$ & $646.9$ & \bad{blow-up $t=0.200$} & \good{positive} \\
& $6$   & $-19.156$ & $20.108$ & $0.188$ & $909.1$ & \bad{blow-up $t=0.200$} & \good{positive} \\
\hline
\end{tabular}}
\end{table}

Coarsening the spatial grid from $h = 0.025$ to $h = 0.05$ does not prevent the FDM from blowing up; this occurs for all 10 $p$-values at every time step.
The blow-up times are generally larger than those at $h = 0.025$, reflecting the fact that the parabolic mesh ratio $r = k/h^2$ decreases as $h$ increases for a fixed~$k$, slightly relaxing the stability constraint.
Nevertheless, the FDM remains globally unstable.
For $k = 0.10$ the FDM blow-up times extend to $t^* = 0.600$ for $p = 2$, a relatively late failure, yet it still prevents the scheme from reaching $T = 1$.
The NSFD scheme remains positive and bounded in all cases.

The NSFD scheme is positive and bounded in all 30 cases without any time-step restriction, while standard FDM is always unstable.
These results provide compelling computational evidence for the unconditional positivity stated in Theorem~\ref{T1} and the boundedness established in Theorem~\ref{T3}.
For completeness, Figures~\ref{Fig1}--\ref{Fig6} are presented in Appendix~\ref{App1}, showing the two-dimensional solution at the final time $T=1$ for all parameter combinations.

\section*{Conclusion and Future Work}\label{S5}

In this work, we have proposed and analyzed an NSFD scheme for a class of nonlinear parabolic equations involving the $p$-Laplacian diffusion operator in one- and two-dimensional spatial domains.
The theoretical contributions of this paper are threefold.
First, we proved that the NSFD scheme~\eqref{E2.3} preserves positivity unconditionally (Theorem~\ref{T1}): if the solution is positive at one time level, then it remains positive at the next time level for any time step size $k > 0$ and any spatial mesh $h > 0$.
Second, we demonstrated that every constant equilibrium $u^*$ of the continuous problem is reproduced exactly by the scheme (Theorem~\ref{T2}), a property that standard explicit methods generally fail to maintain on coarse meshes.
Third, we derived an a priori $\ell^\infty$ bound on the discrete solution (Theorem~\ref{T3}) and proved that the local truncation error satisfies $\tau_{ij}^m = \mathcal{O}(k + h^2)$ (Proposition~\ref{P1}), confirming first-order accuracy in time and second-order accuracy in space.

The numerical experiments corroborate all three results across a comprehensive set of parameter combinations, covering two families of exponents ($p \in \{1.5, 2.5, 3.5, 4.5, 5.5\}$ and $p \in \{2, 3, 4, 5, 6\}$), 
and three time steps ($k \in \{0.01, 0.05, 0.10\}$).
Out of the 30 parameter combinations tested, the standard explicit FDM produced nonphysical negative values and blew up within a small number of time steps in all cases, while the NSFD scheme remained positive, bounded, and convergent.


The proposed NSFD scheme has several practical advantages.
It is fully explicit, requiring no iterative solver at each time step. 
Thus, its computational cost per step is comparable to that of standard FDM.
The positivity and boundedness guarantees eliminate the need for ad hoc clipping or postprocessing of the numerical solution.
The NSFD scheme continues to produce qualitatively correct solutions on coarse temporal grids where the standard FDM is unusable. 
This makes the NSFD scheme particularly attractive for long-time simulations or parameter studies where large time steps are necessary for economic reasons.\\
The results of this paper open several natural lines of investigation.
\begin{enumerate}
\item \textbf{Higher-Order NSFD Schemes.} 
Building on the generalized second-order framework introduced in~\cite{hoang2026generalized}, it would be interesting to construct a second-order-in-time positivity-preserving NSFD scheme for the $p$-Laplacian equation, overcoming the order reduction identified as a limitation above.
\item \textbf{Fractional and Nonlocal Diffusion.} 
A natural extension is to replace the $p$-Laplacian operator with a fractional $p$-Laplacian or a nonlocal diffusion kernel, as in anomalous diffusion models and peridynamics~\cite{Vazquez2006}.
Designing structure-preserving NSFD schemes in this setting presents new analytical and computational challenges.
\item \textbf{Reaction-Diffusion Systems.} 
The epidemic and actuarial models studied in~\cite{Zinihi2026Actuarial, Zinihi2025FDE} involve systems of coupled parabolic equations with $p$-Laplacian-type diffusion.
An important direction for applications in mathematical epidemiology is extending the present NSFD framework to such systems while preserving the positivity of each component and the conservation structure of the system.
\item \textbf{Machine Learning Integration.} 
Recent work~\cite{Zinihi2026CA} has demonstrated that NSFD schemes can generate structure-preserving synthetic data for training physics-informed neural networks (PINNs).
Applying this paradigm to the $p$-Laplacian equation offers a promising approach to data-driven surrogate modeling that retains the qualitative guarantees of the underlying numerical scheme.
\end{enumerate}
We hope that the framework developed here will serve as a foundation for structure-preserving numerical methods for a broader class of nonlinear degenerate parabolic equations arising in science and engineering.

\begin{appendices}
\section{Additional Numerical Illustrations}\label{App1}

Figures~\ref{Fig1}--\ref{Fig6} display the two-dimensional solution at the 
final time $T=1$ for all parameter combinations.
The figures are organized as follows:
\begin{itemize}
\item Figures~\ref{Fig1}, \ref{Fig3}, and \ref{Fig5} correspond to the half-integer family $p \in \{1.5, 2.5, 3.5, 4.5, 5.5\}$ for $h \in \{0.025, 0.05, 0.1\}$, respectively.
\item Figures~\ref{Fig2}, \ref{Fig4}, and \ref{Fig6} correspond to the integer family $p \in \{2, 3, 4, 5, 6\}$ for the same sequence of mesh sizes.
\end{itemize}
Each figure contains five rows (one for each value of $p$) and six columns, which are arranged in three column-pairs, with one pair per time step $k \in \{0.01, 0.05, 0.10\}$. Within each pair, the left panel shows the FDM solution and the right panel shows the NSFD solution.

The FDM panels showing blow-up are displayed with a diverging red-blue color map centered at zero, to make spurious sign changes visible.
The title of each blow-up panel indicates the blow-up time $t^*$, and the plotted field corresponds to the last valid state before blow-up.
The NSFD panels uniformly use the viridis color map, indicating that the solution remains positive throughout.

Several physical features are clearly visible in the NSFD panels.
For small $p$ (close to or below $2$), the diffusion operator is singular, and the solution profile retains the spatial structure inherited from the initial condition. It slowly converges toward the uniform steady state $u^* = 1$.
For large $p$, the diffusion is strongly degenerate, producing flatter interior profiles with steeper gradients concentrated near the domain boundary, a well-known qualitative feature of the $p$-Laplacian equation~\cite{Lindqvist2019}.
However, none of these features are visible in the FDM solutions, which are dominated by numerical artifacts.

\section*{Declarations}

\subsection*{Conflict of Interest} 
\noindent
The authors declared that they have no conflict of interest.

\subsection*{Data Availability} 
\noindent
No data was used for the research described in the article. 

\subsection*{Author Contributions}
\noindent
A. Zinihi: Conceptualization, Soft\-ware, Methodology, Validation, Formal Analysis, Investigation, Writing-Original Draft, Writing-Review and Editing, Visualization.

\noindent
M. Ehrhardt: Conceptualization, Supervision, Methodology, Formal Analysis, Investigation, Writing-Review and Editing.

\noindent
M. R. Sidi Ammi: Conceptualization, Supervision, Methodology, Formal Analysis.

\begin{figure}[H]
\centering 
\includegraphics[width=\textwidth, height=.56\textwidth]{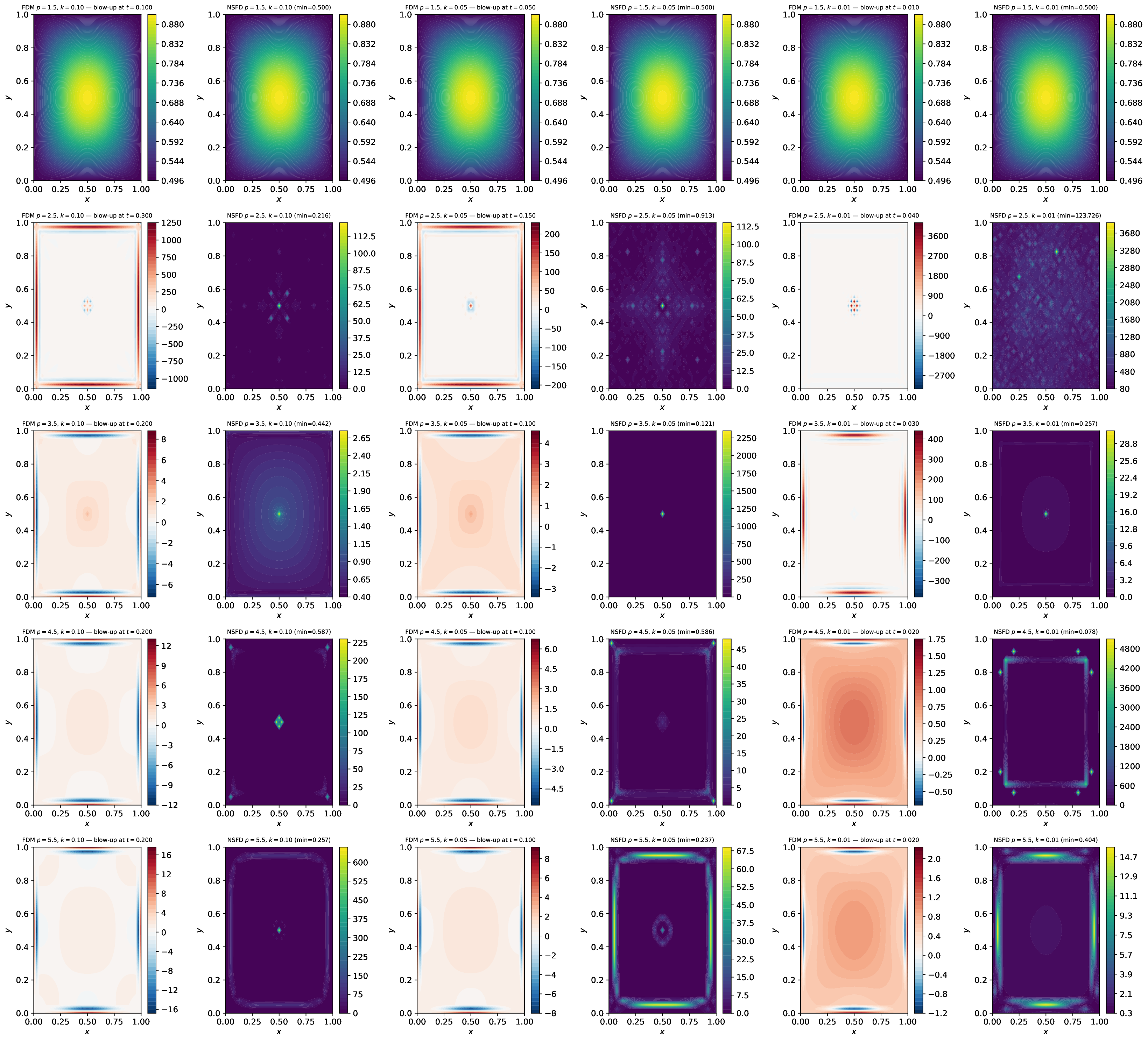}
\caption{Two-dimensional numerical solution at $T=1$ for the integer exponents $p \in \{1.5, 2.5, 3.5, 4.5, 5.5\}$ (rows) and time steps $k \in \{0.01, 0.05, 0.10\}$ (column pairs) with mesh size $h = 0.025$.}\label{Fig1}
\end{figure}

\begin{figure}[H]
\centering
\includegraphics[width=\textwidth, height=.56\textwidth]{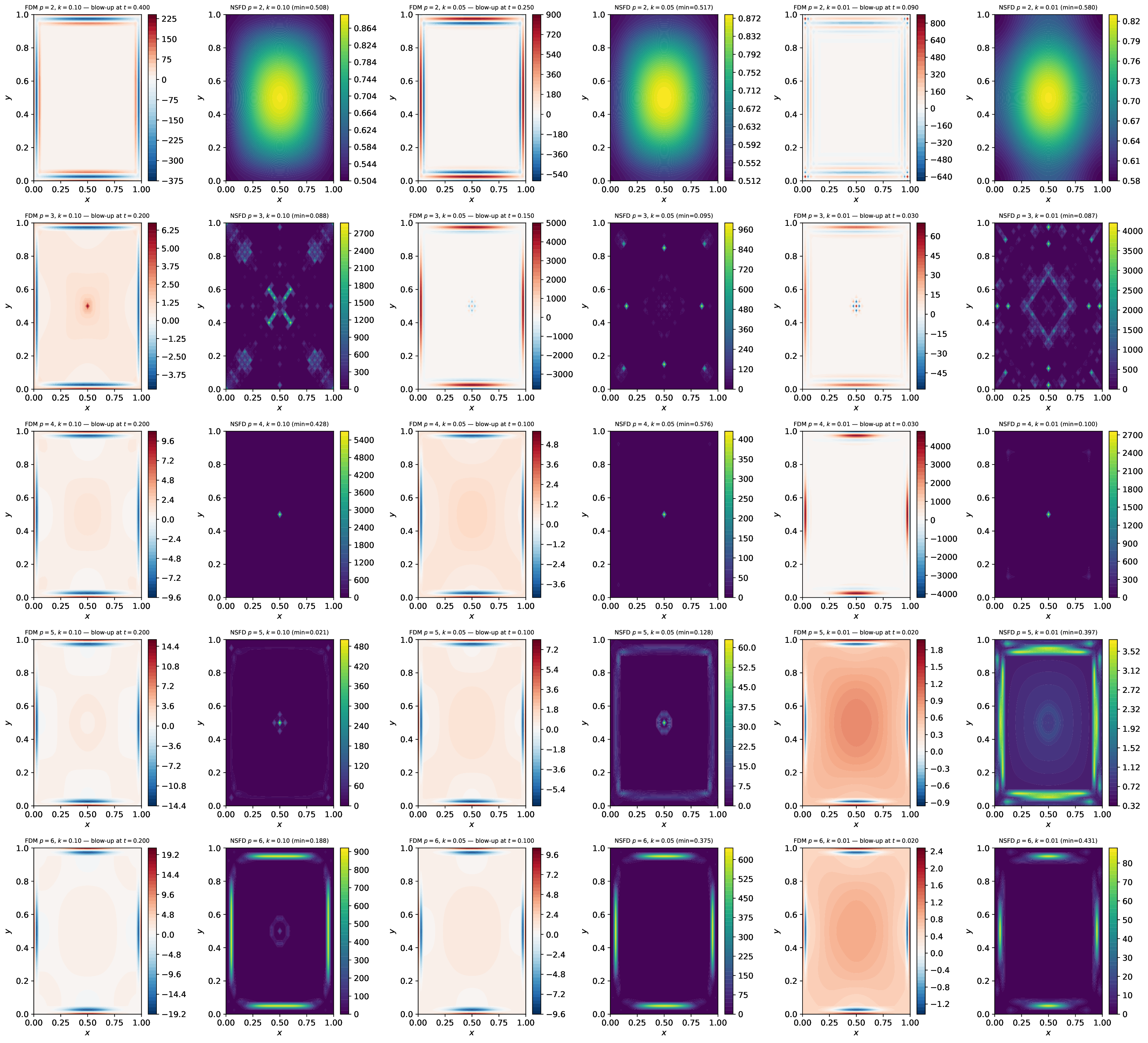}
\caption{Two-dimensional numerical solution at $T=1$ for the integer exponents $p \in \{2, 3, 4, 5, 6\}$ (rows) and time steps $k \in \{0.01, 0.05, 0.10\}$ (column pairs) with mesh size $h = 0.025$.}\label{Fig2}
\end{figure}

\begin{figure}[H]
\centering
\includegraphics[width=\textwidth, height=.56\textwidth]{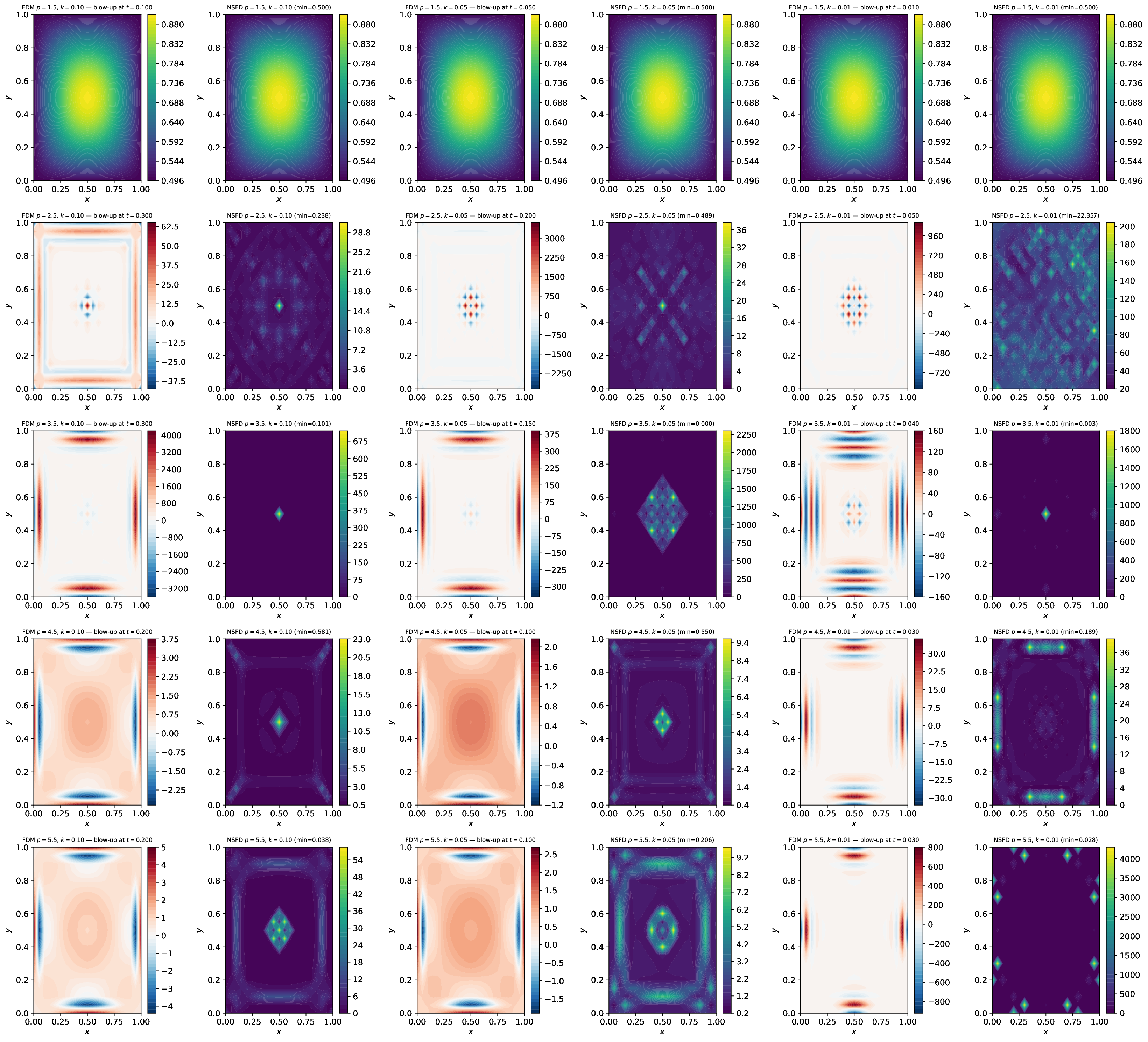}
\caption{Two-dimensional numerical solution at $T=1$ for the integer exponents $p \in \{1.5, 2.5, 3.5, 4.5, 5.5\}$ (rows) and time steps $k \in \{0.01, 0.05, 0.10\}$ (column pairs) with mesh size $h = 0.05$.}\label{Fig3}
\end{figure}

\begin{figure}[H]
\centering
\includegraphics[width=\textwidth, height=.56\textwidth]{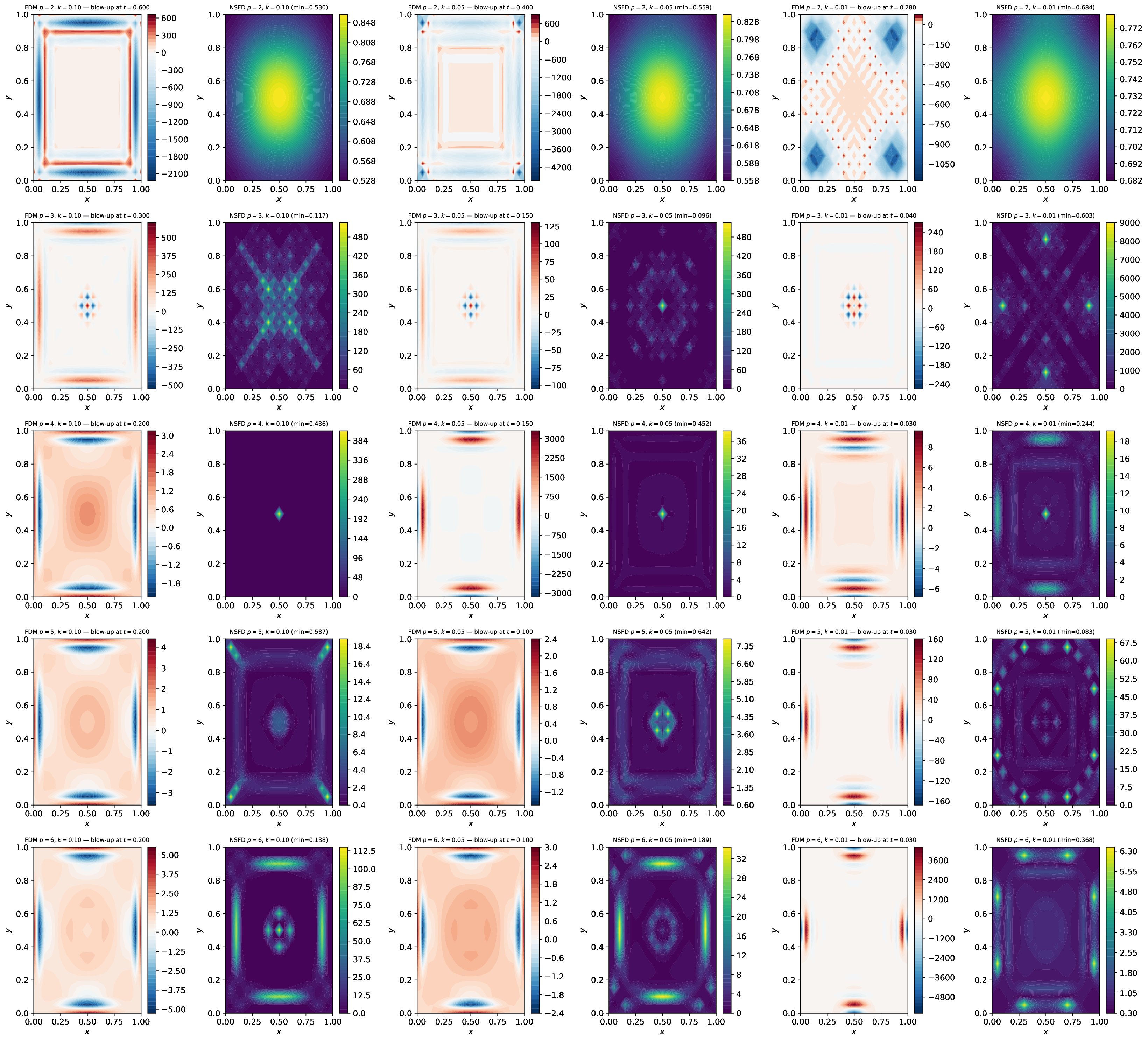}
\caption{Two-dimensional numerical solution at $T=1$ for the integer exponents $p \in \{2, 3, 4, 5, 6\}$ (rows) and time steps $k \in \{0.01, 0.05, 0.10\}$ (column pairs) with mesh size $h = 0.05$.}\label{Fig4}
\end{figure}

\begin{figure}[H]
\centering
\includegraphics[width=\textwidth, height=.56\textwidth]{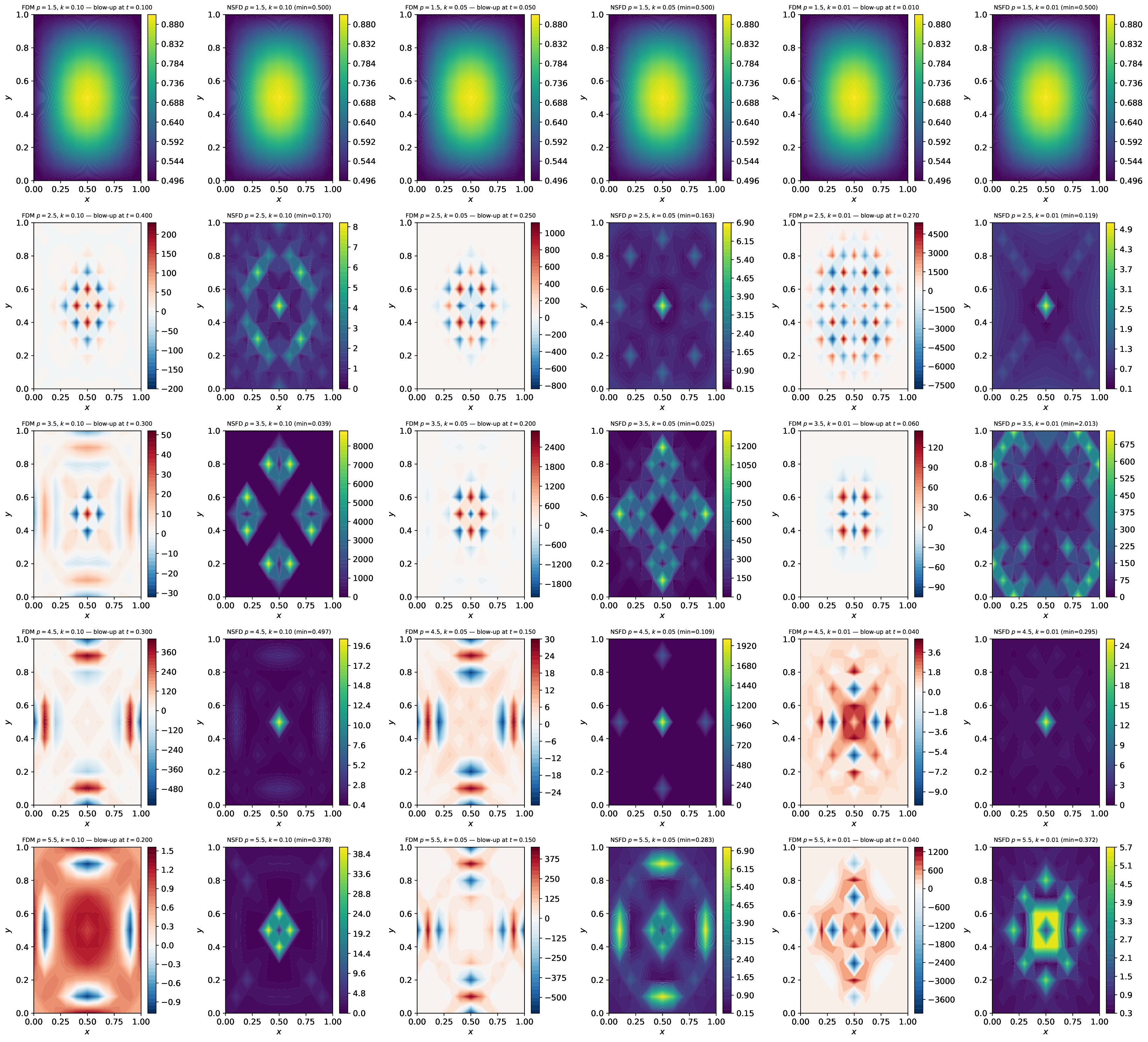}
\caption{Two-dimensional numerical solution at $T=1$ for the integer exponents $p \in \{1.5, 2.5, 3.5, 4.5, 5.5\}$ (rows) and time steps $k \in \{0.01, 0.05, 0.10\}$ (column pairs) with mesh size $h = 0.1$.}\label{Fig5}
\end{figure}

\begin{figure}[H]
\centering 
\includegraphics[width=\textwidth, height=.56\textwidth]{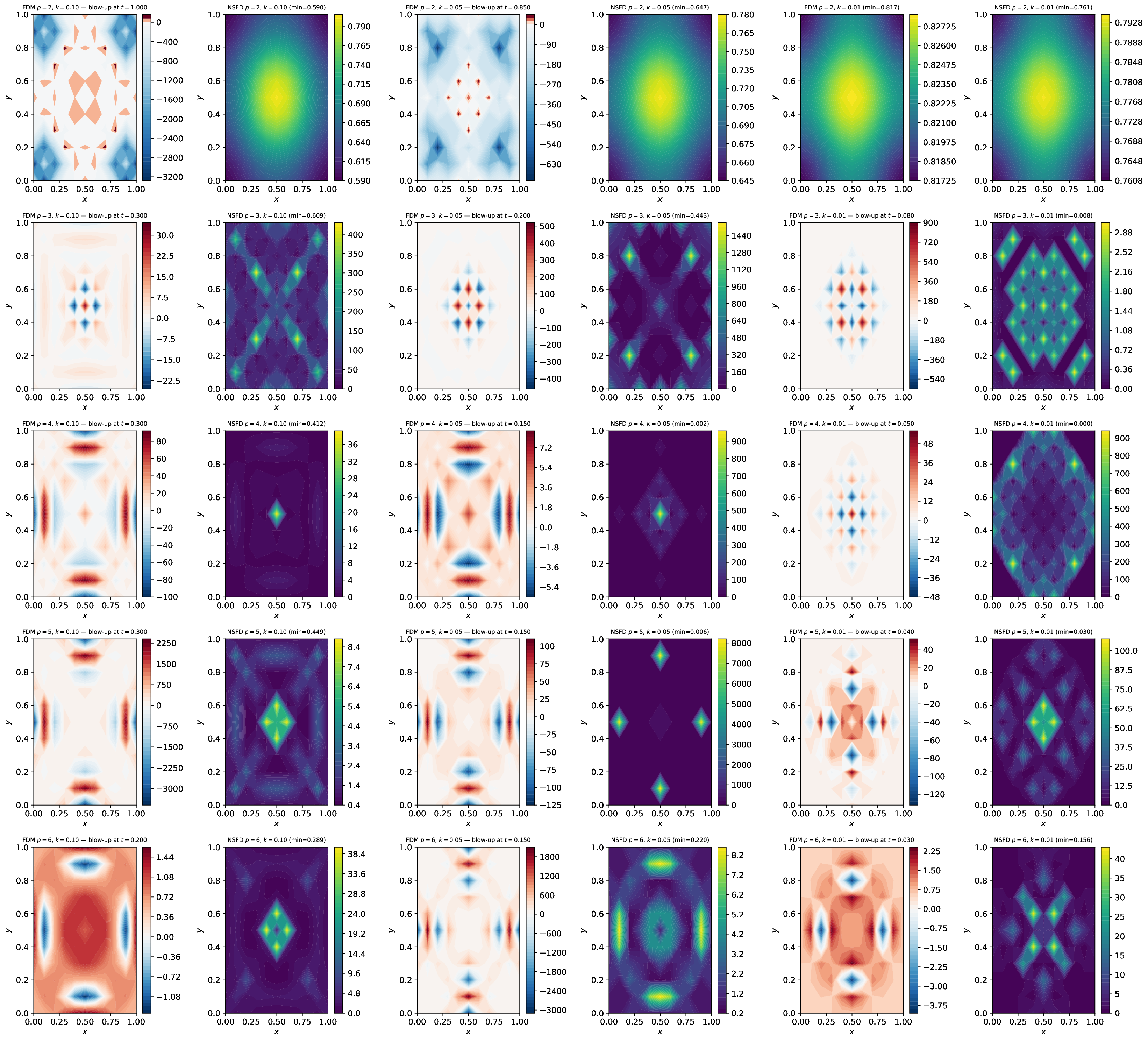}
\caption{Two-dimensional numerical solution at $T=1$ for the integer exponents $p \in \{2, 3, 4, 5, 6\}$ (rows) and time steps $k \in \{0.01, 0.05, 0.10\}$ (column pairs) with mesh size $h = 0.1$.}\label{Fig6}
\end{figure}
\end{appendices}



\bibliographystyle{unsrt}
\bibliography{References}


\end{document}